\newtheorem{theorem}{Theorem}[section]
\newtheorem{proposition}[theorem]{Proposition}
\theoremstyle{definition}
\newtheorem{remark}[theorem]{Remark}
\newcommand{\norm}[1]{\left\Vert#1\right\Vert}
\numberwithin{equation}{section}
\begin{document}
\title[Interpolation scattering for wave equations]{Interpolation scattering for wave equations with singular potentials and singular data}

\author[\ T.X. Pham]{\ \ Truong Xuan Pham}
\address{Truong Xuan Pham\hfill\break
Thang Long Institute of Mathematics and Applied Sciences (TIMAS), Thang Long University, Nghiem Xuan Yem,
Hanoi, Vietnam\hfill\break
and Faculty of Mathematics and Informatics, Hanoi University of Science and Technology,
1 Dai Co Viet, Hanoi, Vietnam} 
\email{xuanpt@thanglong.edu.vn and phamtruongxuan.k5@gmail.com}

\thanks{{\bf Acknowledgment.} P.T. Xuan was funded by the Postdoctoral Scholarship Programme of Vingroup Innovation Foundation (VINIF), code VINIF.2023.STS.55.}

\begin{abstract}
In this paper we investigate a construction of scattering for wave-type equations with singular potentials on the whole space $\mathbb{R}^n$ in a framework of weak-$L^p$ spaces. First, we use an Yamazaki-type estimate for wave groups on Lorentz spaces and fixed point arguments to prove the global well-posedness for wave-type equations on weak-$L^p$ spaces. Then, we provide a corresponding scattering results in such singular framework. Finally, we use also the dispersive estimates to establish the polynomial stability and improve the decay of scattering in {weak-$L^p$ spaces}.
\vspace{0.2cm}

\noindent\textbf{Keywords:} Wave equations, Wave operators, Singular potentials, Lorentz spaces, Global well-posedness, Scattering, Stability. \vspace{0.2cm}

\noindent\textbf{AMS MSC:} primary 35L05, 35L71, 35L15, 35A01, 35B06; secondary 35C06, 42B35

\end{abstract}

\maketitle


\font\nho=cmr10





\section{Introduction}
In the present paper, we are concerned with the wave-type equation on the whole space $\mathbb{R}^{n}$ $(n\geqslant 5)$:
\begin{equation}%
\begin{cases}
\partial_{t}^{2}u(t,x)-\Delta_{x}u(t,x)+V_1(x)u(t,x) = V_2(x)F(u(t,x)),\\
u(0,x)=u_{0}(x),\,\,\partial_{t}u(t,x)=u_{1}(x),
\end{cases}
\label{OKlein}%
\end{equation}
where $\Delta_{x}$ stands for the Laplace-Beltrami operator, the potentials $V_1(x) = \dfrac{c_1}{|x|^2}$ (is called Hardy potential), $V_2(x)=\dfrac{c_2}{|x|^b}\, (\text{where   }0<b<2)$ and the
nonlinearity $F(u)$ satisfies
\begin{equation}
|F(0)|=0\text{ and }|F(u)-F(v)|\leqslant C(|u|^{q-1}+|v|^{q-1})|u-v|,\text{ for
}q>1. \label{nonlinearity-1}%
\end{equation}
The wave equation with generalized potential $V_1(x)$ in the linear term was considered in many works (see for examples \cite{Fe2017,An,Cu,Ge}). 
Moreover, the nonlinear part formed by $V_2(x)|u|^{q-1}u$ was studied in some previous works for nonlinear wave equations (see \cite{Luce}) with $V_2(x)$ is regular and for nonlinear Schr\"odinger equations (see for examples \cite{Campos,Farah1,Farah2,Ahmed,
Guli2020,Ou2025}) with $V_2(x)$ is singular.

The wave equations on Euclidean space $\mathbb{R}^n$ have been extensively to study for a long time by many authors. We recall briefly some important results. The existence of global solutions was
analyzed by Georgiev \textit{et al.} \cite{Ge1997}, Ginibre and Velo
\cite{Gi1}, Zhou \cite{Zho1995}, Belchev \textit{et al.} \cite{Bel}, among
others. The nonexistence of global solutions was studied by Sideris \cite{Si}.
Time decays of solutions for wave equations were proved in \cite{Fec1982,Gi2,G3,Str1}. In particular, Fecher \cite{Fec1982,Fec1984} obtained results on
time decays and established the nonlinear small data scattering for wave and
Klein-Gordon equations in $\mathbb{R}^{3}$. The scattering theory was also
studied by using the time decays in the works \cite{G4, Lin1995, Str2}. After
that, Hidano \cite{Hi1998,Hi2000,Hi2001} obtained the small data scattering
and blow-up theory for nonlinear wave equation on $\mathbb{R}^{n}$ with
$n=3,\,4$ by using the integral representation formula of solutions.
Concerning the nonlinear wave equations, Strichartz estimates were obtained in
\cite{Ge1997,G4,Tao}. Moreover, we quote the results on blow-up \cite{Gl,Jo}
and the life-span of solutions \cite{LiYu,LiZho}.

The program of study the well-posedness and scattering in weak-$L^{p}$ spaces for some dispersive equations (such as Schr\"odinger, wave and Boussinesq equations) was initially introduced by Cazenave et al. in \cite{Ca2001} for Schr\"{o}dinger equations on $\mathbb{R}^{n}$ by using Strichartz-type estimates and establishing the existence in mixed space-time weak-$L^{p}$ spaces, i.e., $L^{p,\infty}(\mathbb{R}\times\mathbb{R}^{n})$ with $p=\frac
{(b-1)(N+2)}{2}$. Then, Ferreira et al. gave another approach in \cite{Fe2009} to obtain the global well-posedness and asymptotic behaviour for Schr\"{o}dinger equations on $\mathbb{R}^{n},$ but using
dispersive-type estimates in a
framework of time polynomial weighted spaces based on the $L^{(p,\infty
)}(\mathbb{R}^{n})$ with $p=b+1$. Their results extend the ones obtained in the
$L^{p}$-setting by Cazenave and Weissler \cite{Ca1998, Ca2000}. After that, the well-posedness and scattering results for Boussinesq equations \cite{Fe2011} and wave equations have obtained in \cite{Fe2017} and \cite{Liu2009}, respectively. 
The method used in \cite{Fe2017,Fe2009,Liu2009} bases on the $L^{(p,r)}$-$L^{(p^{\prime},r)}$-dispersive estimates on $\mathbb{R}^{n},$ where $1/p+1/p^{\prime}=1$, $1\leqslant r\leqslant\infty,$ and $L^{(p,r)}$ stands for the so-called Lorentz space. The weak-$L^{p}$ space corresponds to the case $r=\infty.$ The global well-posedness and scattering in the space $L^{(p,\infty)}(\mathbb{R}^{n})$ via that approach require the use of suitable Kato-type classes. In addition, the existence of asymptotically almost periodic mild solutions for wave equations within the same framework was established in \cite{SacXuan2025}.

Let us present more details about the works \cite{Fe2017,Liu2009} for wave equations. In particular, Liu  \cite{Liu2009} used the $L^p-L^{p'}$-dispersive estimates of wave operators to establish the global well-posedness of mild solution for the scalar wave equations $\partial_t^2u -\Delta u = \lambda|u|^{b-1}u$ on $\mathbb{R}^n$ in the following time polynomial weighted space
\begin{equation}
\mathcal{E}_{\alpha}=\left\{  u\text{ measurable}:\text{ }\sup_{t\in
\mathbb{R}}\text{ }|t|^{\frac{\alpha}{2}}\left\Vert u(t,\cdot)\right\Vert _{L^{(b+1,\infty
)}}<\infty\right\}
\end{equation}
as well as the initial-data class $\left\{  (u_0,u_1)\in \mathcal{S}^\prime\times\mathcal{S}^{\prime
}:\text{ }\sup\limits_{t\in\mathbb{R}}\text{ }|t|^{\frac{\alpha}{2}}\left\Vert \dot{W}(t)u_0 + W(t)u_1
\right\Vert _{L^{(b+1,\infty)}}<\infty\right\}  $, where $W(t)$ is the wave operator and the power $\alpha = \dfrac{4}{b-1} - \dfrac{2n}{b+1}$ for suitable numbers $b$ and $n$ (for details see \cite[Theorem 2.9]{Liu2009}). Besides, Ferreira et al.  considered in \cite{Fe2017} the wave-type equations (with only a Hardy potential) that have form \eqref{OKlein} for $V_1(x)=\dfrac{c}{|x|^2}$ and $V_2(x)$ is a constant. Using $L^{l_1}-L^{l_2}$-dispersive estimates for the wave groups (see inequalities \eqref{W2} and \eqref{W3} below), the authors established a Yamazaki-type estimate for wave groups, then using this estimate to proven the global-in-time well-posedness for such wave-type equations on a weak-Lorentz space $L^\infty(\mathbb{R},L^{(r_0,\infty)}_{rad}(\mathbb{R}^n))$ which does not consist the time weight factor as in \cite{Liu2009} (for more details see \cite[Theorem 3.3]{Fe2017}). On hand, the works \cite{Fe2017,Liu2009} provide the global well-posedness for wave equations with the initial data outside the usual energy space $H^1(\mathbb{R}^n)\times L^2(\mathbb{R}^n)$. On the other hand, the work \cite{Fe2017} shows the useful role of weak-$L^p$ spaces to treating the singular potentials.

In the present paper, we develope the method used in \cite{Fe2017} to obtain the global well-posedness, an interpolation scattering and a polynomial stability for wave-type equations \eqref{OKlein}. In particular, we employ the Yamazaki-type estimate of the wave group and fixed point arguments to establish the global well-posedness and a scattering results for equation \eqref{OKlein} in the framework of weak-$L^p$ spaces (see Theorem \ref{Global}). Then, we use the $L^{l_1}-L^{l_2}$-dispersive estimates of the wave groups to prove the polynomial stability with the initial data in such singular framework (see Theorem \ref{polynomial}). Using the stability we improve the decay of scattering behaviour (see Remark \ref{rem}).
Since, our scattering result is obtained only on the weak-$L^p$ space which does not consists the time weighted factors, we call this scattering-type by the terminology \textit{''interpolation scattering''}.

The paper is organized as follows. In Subsection \ref{S2}, we provide the wave-type equations with singular potentials on $\mathbb{R}^{n}$ and the dispersive estimates in the $L^{p}
$-setting. In Subsection \ref{S3}, we recall the notion of Lorentz spaces
$L^{(p,q)}$ on $\mathbb{H}^{n}$ and the dispersive and Yamazaki-type estimates of the wave operators on the $L^{(p,q)}$ spaces. We state and prove the main results consisting the global-in-time well-posedness and interpolation scattering in Subsection \ref{S31} and polynomial stability in Subsection \ref{S32}.

\section{Preliminaries}
\subsection{Wave-type equations and dispersive estimates}\label{S2} 
We consider the semilinear wave equation with singular potentias on $\mathbb{R}^{n}$, where $n\geqslant 5$:
\begin{equation}%
\begin{cases}
\partial_{t}^{2}u(t,x)-\Delta_{x}u(t,x)+V_1(x)u(t,x) = V_2(x)F(u(t,x)),\\
u(0,x)=u_{0}(x),\,\,\partial_{t}u(t,x)=u_{1}(x),
\end{cases}
\label{Klein}%
\end{equation}
where $V_1(x)=\dfrac{c_1}{|x|^2}$, $V_2(x) = \dfrac{c_2}{|x|^b}\,\, (\text{where   } 0<b<2)$ and the nonlinearity $F(u(t,x))$
satisfies
\begin{equation}
F(0)=0\text{ and }|F(u)-F(v)|\leqslant C(|u|^{q-1}+|v|^{q-1})|u-v|
\label{Klein-Cond-F}%
\end{equation}
with $q>1$. 

Setting $D=\sqrt{-\Delta_x}$ and by using Duhamel's principle we can define the mild solution of equation (\ref{Klein}) as follows
\begin{equation}
u(t)=\dot{W}(t)u_{0}+W(t)u_{1}+\int_{0}^{t}W(t-s) \left(  -V_1(x)u(s,x) + V_2(x)F(u(s,x)) \right) ds,
\label{intergralEq}%
\end{equation}
where $W(t)$ is the wave group and $\dot{W}(t)=\partial_tW(t)$. Recall that
\begin{equation}
W(t)=\dfrac{\sin(tD)}{D}\text{ \ and \ }\dot{W}(t)=\cos(tD).
\label{aux-wave-rel-1}%
\end{equation}

To recall the dispersive estimates of the wave group $\left\{ W(t)\right\}_{t\in \mathbb{R}}$ which are useful to establish the well-posedness in next sections, we define the following points
\begin{eqnarray}
&&P_1 = \left( \frac{1}{2}+\frac{1}{n+1}, \frac{1}{2} - \frac{1}{n+1}  \right), \,\,\, P_2 = \left( \frac{1}{2} - \frac{1}{n-1}, \frac{1}{2} - \frac{1}{n-1}  \right)\cr
&&P_3 = \left( \frac{1}{2}+\frac{1}{n-1}, \frac{1}{2} + \frac{1}{n-1}  \right),\,\,\, P_4 = \left( 1, \frac{n-1}{2n} \right) \hbox{   and    } P_5=(1,1).
\end{eqnarray}
We know that the wave group $\left\{ W(t)\right\}_{t\in \mathbb{R}}$ is bounded from $L^{l_1}$ to $L^{l_2}$ at $t=1$ (see \cite{Bre,Peral,Strich}):
\begin{equation}\label{W1}
\|W(1)h\|_{l_2} \leqslant M_1 \| h\|_{l_1}
\end{equation}
provided that $\left( \dfrac{1}{l_1},\dfrac{1}{l_2} \right) \in \overline{\Delta_{P_1P_2P_3}}$, where $\overline{\Delta_{P_1P_2P_3}}$ is the closure of $\Delta_{P_1P_2P_3}$. From scaling properties of $W(t)$ and $L^p$-spaces we obtain from inequality \eqref{W1} that
\begin{equation}\label{W2}
\|W(t)h\|_{l_2} \leqslant M_1 |t|^{-n\left(  \frac{1}{l_1} - \frac{1}{l_2} \right) +1} \| h\|_{l_1}
\end{equation}
If $h$ is radial symmetric, then one can extend the range of \eqref{W1} to $\overline{\Delta_{P_2P_4P_5}}$ except for the semi-open segment line $]P_1,P_4]$.
Precisely, if $\left( \dfrac{1}{l_1},\dfrac{1}{l_2} \right) \in {\Delta_{P_2P_4P_5}}$ and $h\in L^{l_1}_{rad}(\mathbb{R}^n)$, then we have
\begin{equation}\label{W3}
\|W(t)h\|_{l_2} \leqslant M_2 |t|^{-n\left(  \frac{1}{l_1} - \frac{1}{l_2} \right) +1} \| h\|_{l_1}.
\end{equation}
The inequalities \eqref{W2} and \eqref{W3} are called the $L^{l_2}-L^{l_1}$-dispersive estimates of the wave group $\left\{W(t)\right\}_{t\in \mathbb{R}}$.

\subsection{Lorentz spaces and Yamazaki-type estimates for wave groups}\label{S3}

Let $\Omega$ be a subset of $\mathbb{R}^{n}$. For $0<p<\infty,$ denote by
$L^{p}(\Omega)$ the space of all $L^{p}$-integrable functions on $\Omega.$ Let
$1<p_{1}<p_{2}\leqslant\infty,$ $\theta\in(0,1)$ and $1\leqslant
r\leqslant\infty$ with $\dfrac{1}{p}=\dfrac{1-\theta}{p_{1}}+\dfrac{\theta
}{p_{2}}.$ The Lorentz space $L^{(p,r)}(\Omega)$ is defined as the
interpolation space $\left(  L^{p_{1}},L^{p_{2}}\right)  _{\theta,r}=L^{(p,r)}
$ with the natural norm $\left\Vert \cdot\right\Vert _{(p,r)}$ induced by the
functor $(\cdot,\cdot)_{\theta,r}.$ In particular, $L^{p}(\Omega
)=L^{(p,p)}(\Omega)$ and $L^{(p,\infty)}$ is the so-called weak-$L^{p}$ space
or the Marcinkiewicz space on $\Omega$.

For $1\leqslant q_{1}\leqslant p\leqslant q_{2}\leqslant\infty,$ we have the
following relation
\begin{equation}
L^{(p,1)}(\Omega)\subset L^{(p,q_{1})}(\Omega)\subset L^{p}(\Omega)\subset
L^{(p,q_{2})}(\Omega)\subset L^{(p,\infty)}(\Omega). \label{Inclusion}%
\end{equation}
Let $1<p_{1},p_{2},p_{3}\leqslant\infty$ and $1\leqslant r_{1},r_{2}%
,r_{3}\leqslant\infty$ be such that $\dfrac{1}{p_{3}}=\dfrac{1}{p_{1}}%
+\dfrac{1}{p_{2}}$ and $\dfrac{1}{r_{1}}+\dfrac{1}{r_{2}}\geqslant\dfrac
{1}{r_{3}}$. We have the H\"{o}lder inequality%
\begin{equation}
\left\Vert fg\right\Vert _{{(p_{3},r_{3})}}\leqslant C\left\Vert f\right\Vert
_{{(p_{1},r_{1})}}\left\Vert g\right\Vert _{{(p_{2},r_{2})}}, \label{Holder}%
\end{equation}
where $C>0$ is a constant independent of $f$ and $g$. Moreover, for
$1<p_{1}<p_{2}\leqslant\infty$, $0<\theta<1$, $1\leqslant r_{1},r_{2}%
,r\leqslant\infty,$ and $\dfrac{1}{p}=\dfrac{1-\theta}{p_{1}}+\dfrac{\theta
}{p_{2}},$ by reiteration theorem (see \cite[Theorem 3.5.3 ]{BeLo}), we have
the interpolation property
\begin{equation}
\left(  L^{(p_{1},r_{1})}(\Omega),L^{(p_{2},r_{2})}(\Omega)\right)  _{\theta,r}=L^{(p,r)}(\Omega).
\label{interp1}%
\end{equation}

By using interpolation argument (see \cite{BeLo}) we can extend the dispersive estimates \eqref{W2} and {\eqref{W3} to the
framework of Lorentz spaces. In particular, we have
\begin{equation}\label{W4}
\|W(t)h\|_{(l_2,z)} \leqslant M_1 |t|^{-n\left(  \frac{1}{l_1} - \frac{1}{l_2} \right) +1} \| h\|_{(l_1,z)}
\end{equation}
for $\left( \dfrac{1}{l_1},\dfrac{1}{l_2} \right) \in \overline{\Delta_{P_1P_2P_3}}$ and $h \in L^{(l_1,z)}(\mathbb{R}^n)\,\, (1\leqslant z \leqslant \infty)$. The following inequality hold also
\begin{equation}\label{W5}
\|W(t)h\|_{(l_2,z)} \leqslant M_2 |t|^{-n\left(  \frac{1}{l_1} - \frac{1}{l_2} \right) +1} \| h\|_{(l_1,z)}
\end{equation}
for $\left( \dfrac{1}{l_1},\dfrac{1}{l_2} \right) \in {\Delta_{P_2P_4P_5}}$ and $h\in L^{(l_1,z)}_{rad}(\mathbb{R}^n)\,\, (1\leqslant z \leqslant \infty)$. 

Below, we recall an Yamazaki-type inequality for the wave group which was proven in \cite{Fe2017}.
\begin{proposition}\label{Yamazaki}(Yamazaki-type estimate).
Let $f$ be radially symmetric and $n\geqslant 3$ odd. If $1<d_1,d_2 < \dfrac{2(n-1)}{n-1}\,\, (\infty \text{  if  } n=3)$ with $\left( \dfrac{1}{d_1},\dfrac{1}{d_2}  \right) \in \Delta_{P_2P_4P_5}$, then $|t|^{n\left( \frac{1}{d_1}-\frac{1}{d_2}\right)-2}W(t)f \in L^1(\mathbb{R}, L^{(d_2,1)}(\mathbb{R}^n))$ and there is a positive constant $C$ such that
\begin{equation}
\int_{\mathbb{R}}|t|^{n\left( \frac{1}{d_1}-\frac{1}{d_2}\right)-2} \|W(t)f\|_{(d_2,1)}dt \leqslant C\| f\|_{(d_1,1)}
\end{equation}
for all $f\in L^{(d_1,1)}_{rad}(\mathbb{R}^n)$.
\end{proposition}
\begin{proof}
The proof of this proposition was given in \cite[Lemma 4.1]{Fe2017} by using inequality \eqref{W5} and interpolation arguments.
\end{proof}

\section{Well-posedness, interpolation scattering and stability on weak-$L^p$ spaces}\label{S4}

\subsection{Global well-posedness and interpolation scattering}\label{S31}
In this part we state and prove the global well-posedness and scattering for equation \eqref{Klein} on the framework of weak-$L^p$ spaces in the following theorem.
\begin{theorem}\label{Global}
Let $n\geqslant 5$ be old,  $p=\dfrac{2q-b}{2-b} \geqslant \dfrac{n^2+n-4}{n(n-3)}$ and $r_0 = \dfrac{n(p-1)}{2}$. Suppose $(u_0,u_1)\in \mathcal{I}_{rad}$, where
\begin{equation*}
\mathcal{I}_{rad} = \left\{ (u_0,u_1) \in \mathcal{S}'_{rad}\times \mathcal{S}'_{rad}: \dot{W}(\cdot)u_0 + W(\cdot)u_1 \in L^\infty(\mathbb{R}, L^{(r_0,\infty)}_{rad}(\mathbb{R}^n)) \right\},
\end{equation*}
the subindex ''rad'' means space of radial distributions.
\begin{itemize}
\item[$(i)$] (Global well-posedness). If $\| (u_0,u_1)\|_{\mathcal{I}_{rad}}$ and $c_1$ are small enough, then equation \eqref{Klein} has a unique mild solution $u$  in a ball with small radius in $L^\infty(\mathbb{R},L^{(r_0,\infty)}_{rad}(\mathbb{R}^n))$ provided by formula \eqref{intergralEq}.

\item[$(ii)$] (Interpolation scattering). There exist $(u_{0}^{\pm},u_{1}^{\pm})\in\mathcal{I}_{rad}$ satisfying
\begin{equation}
\left\Vert u(t)-u^{+}(t)\right\Vert _{(r_0,\infty)} \longrightarrow 0,\hbox{
as  }t\rightarrow+\infty,\label{ine16}%
\end{equation}
\begin{equation}
\left\Vert u(t)-u^{-}(t)\right\Vert _{(r_0,\infty)} \longrightarrow 0,\hbox{  as  }t\rightarrow-\infty,\label{ine17}%
\end{equation}
where $u^{\pm}$ are the unique solutions of the associated linear problem
$u^{\pm}=\dot{W}(t)u_{0}^{\pm}+W(t)u_{1}^{\pm}$ that are called ''interpolation scattering'' states of the mild solution $u$ at the future time and past time, respectively.
\end{itemize}
\end{theorem}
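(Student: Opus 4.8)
The plan is to run a fixed-point argument for the integral map associated with \eqref{intergralEq} on the Banach space $X = L^\infty(\mathbb{R}, L^{(r_0,\infty)}_{rad}(\mathbb{R}^n))$. First I would define
$$\Phi(u)(t) = \dot{W}(t)u_0 + W(t)u_1 + \int_0^t W(t-s)\bigl(-V_1(x)u(s) + V_2(x)F(u(s))\bigr)\,ds,$$
and show $\Phi$ is a contraction on a small closed ball $B_\rho \subset X$. The linear part $\dot{W}(\cdot)u_0 + W(\cdot)u_1$ lies in $X$ with norm $\|(u_0,u_1)\|_{\mathcal{I}_{rad}}$ by hypothesis, so everything reduces to estimating the two Duhamel integrals in the $L^{(r_0,\infty)}$-norm uniformly in $t$.

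The key mechanism is to combine the Yamazaki-type estimate (Lemma \ref{Yamazaki}) with the Hölder inequality \eqref{Holder} on Lorentz spaces. For the Hardy term, $V_1(x)u(s) = c_1|x|^{-2}u(s)$; since $|x|^{-2} \in L^{(n/2,\infty)}(\mathbb{R}^n)$, Hölder gives $\|V_1 u(s)\|_{(d_1,1)} \lesssim |c_1|\,\|u(s)\|_{(r_0,\infty)}$ for a suitable exponent $d_1$ with $\frac{1}{d_1} = \frac{2}{n} + \frac{1}{r_0}$ (choosing the second Lorentz index so that $\frac{1}{\infty}+\frac{1}{\infty} \ge \frac{1}{1}$ fails, so one must be slightly careful — actually one uses $L^{(n/2,\infty)}\cdot L^{(r_0,\infty)} \hookrightarrow L^{(d_1,\infty)}$ and then the inclusion chain, or argues at the level of $(d_1,1)$ by a more refined Hölder with indices summing correctly). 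For the nonlinear term, \eqref{Klein-Cond-F} gives $|V_2(x)F(u(s))| \lesssim |c_2||x|^{-b}|u(s)|^q$, and since $|x|^{-b} \in L^{(n/b,\infty)}$ and $|u(s)|^q \in L^{(r_0/q,\infty)}$ with norm $\|u(s)\|_{(r_0,\infty)}^q$, Hölder produces $\|V_2 F(u(s))\|_{(d_1,1)} \lesssim |c_2|\,\|u(s)\|_{(r_0,\infty)}^q$ where now $\frac{1}{d_1} = \frac{b}{n} + \frac{q}{r_0}$. The algebraic identity $p = \frac{2q-b}{2-b}$, together with $r_0 = \frac{n(p-1)}{2}$, is precisely what makes both choices of $d_1$ consistent with a single pair $(\frac{1}{d_1},\frac{1}{d_2})$ lying in $\Delta_{P_2P_4P_5}$ and with $d_2$ chosen so that $d_2 = r_0$ after applying $W(t-s)$ — i.e. so that the exponent $n(\frac{1}{d_1}-\frac{1}{d_2}) - 2$ in Lemma \ref{Yamazaki} matches the scaling dictated by $r_0$. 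Verifying these exponent arithmetic constraints (that the lower bound $p \ge \frac{n^2+n-4}{n(n-3)}$ and $n \ge 5$ odd guarantee $(\frac{1}{d_1},\frac{1}{d_2}) \in \Delta_{P_2P_4P_5}$ and that all Hölder indices are admissible, $1 < d_1 \le d_2$, etc.) is the technical heart of part $(i)$; this is where I expect the main obstacle, since it requires a careful bookkeeping of the triangle region and the Lorentz second indices.

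Once the bounds
$$\Bigl\|\int_0^t W(t-s)\bigl(-V_1 u(s) + V_2 F(u(s))\bigr)\,ds\Bigr\|_{(r_0,\infty)} \le C|c_1|\,\|u\|_X + C|c_2|\,\|u\|_X^q$$
and the analogous Lipschitz estimate (using $|F(u)-F(v)| \le C(|u|^{q-1}+|v|^{q-1})|u-v|$ and Hölder) are in hand, the Banach fixed point theorem applies on $B_\rho$ once $\|(u_0,u_1)\|_{\mathcal{I}_{rad}}$, $|c_1|$ and $\rho$ are small enough; smallness of $c_1$ is needed because the Hardy term is \emph{linear} and so its contribution cannot be absorbed by shrinking $\rho$. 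This proves $(i)$.

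For part $(ii)$, the interpolation scattering, I would set $u_0^+ = u_0$ and
$$u_1^+ = u_1 + \int_0^{\infty} \dot{W}(-s)^{-1}\cdot(\text{integrand})\,ds$$
more precisely, using the group structure $W(t-s) = \dot W(t)W(-s) + W(t)\dot W(-s)$ (from \eqref{aux-wave-rel-1} and trigonometric addition formulas), decompose the Duhamel term so that $u(t) - [\dot W(t)u_0^+ + W(t)u_1^+]$ equals a tail integral $\int_t^{\infty} W(t-s)(\cdots)\,ds$ plus terms multiplied by $\dot W(t), W(t)$ that vanish; then by Lemma \ref{Yamazaki} the $L^{(r_0,\infty)}$-norm of this tail is bounded by $\int_t^{\infty} |t-s|^{n(\frac1{d_1}-\frac1{d_2})-2}\|{-}V_1 u(s)+V_2 F(u(s))\|_{(d_1,1)}\,ds$, which is the tail of a convergent integral and hence $\to 0$ as $t \to +\infty$. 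One must also check $(u_0^+,u_1^+) \in \mathcal{I}_{rad}$, which follows since the defining quantity $\|\dot W(\cdot)u_0^+ + W(\cdot)u_1^+\|_X$ differs from $\|u\|_X$ by a finite Duhamel contribution already controlled above. The case $t \to -\infty$ is symmetric with integration over $(-\infty, 0]$. The only subtlety here is making the decomposition of $W(t-s)$ rigorous at the level of tempered distributions and confirming the discarded boundary terms indeed cancel; this is routine given the explicit formulas \eqref{aux-wave-rel-1}.
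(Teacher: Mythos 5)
Your overall fixed-point framework matches the paper's, but the way you propose to close the key estimate has a genuine gap. You want to bound $\|V_1u(s)\|_{(d_1,1)}$ and $\|V_2F(u(s))\|_{(d_1,1)}$ by H\"older and then feed these directly into Lemma \ref{Yamazaki}. This cannot work: in \eqref{Holder} the second Lorentz exponents must satisfy $1/r_1+1/r_2\geqslant 1/r_3$, so the product of $V_i\in L^{(\cdot,\infty)}$ with $u(s)\in L^{(r_0,\infty)}$ (or with $|u(s)|^{q}$ controlled in $L^{(r_0/q,\infty)}$) lands only in $L^{(d_1,\infty)}$, and the inclusions \eqref{Inclusion} go the wrong way ($L^{(d_1,1)}\subset L^{(d_1,\infty)}$, not conversely); no ``more refined H\"older'' can produce second index $1$ from two factors whose second index is $\infty$. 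Since Lemma \ref{Yamazaki} requires its input in $L^{(d_1,1)}_{rad}$ (and, moreover, is an integrated-in-time estimate for a \emph{fixed} spatial function, not a pointwise weighted bound you can insert under the $ds$-integral), your plan of applying it to the Duhamel integrand fails. The missing idea is the paper's duality argument: estimate $\langle\zeta(f)(t),\phi\rangle$ for a smooth radial test function $\phi$, use H\"older to pair $f(\tau)\in L^{(s,\infty)}$ (with the exponent $s=r_0/p$) against $W(t-\tau)\phi\in L^{(s',1)}$, and apply the Yamazaki estimate to $\phi$ with the dual exponents $(d_1,d_2)=(r_0',s')$, for which \eqref{d1d2} makes the time weight identically one; this gives $\|\zeta(f)(t)\|_{(r_0,\infty)}\leqslant K\sup_{t}\|f(t,\cdot)\|_{(s,\infty)}$ uniformly in $t$, and only then do the exponent identities $1/s=2/n+1/r_0=b/n+q/r_0$ close the contraction on $B_\rho$. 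Note also that the pointwise dispersive estimate \eqref{W5} cannot substitute for this step: with these exponents it yields decay $|t-\tau|^{-1}$, whose time integral diverges.

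The same issue undermines your part $(ii)$: the proposed tail bound $\int_t^{\infty}|t-s|^{n(1/d_1-1/d_2)-2}\|{-V_1u(s)+V_2F(u(s))}\|_{(d_1,1)}\,ds$ both presumes a strong Lorentz norm of the integrand that is not available and treats Lemma \ref{Yamazaki} as a pointwise weight estimate; moreover, with the correct exponents the weight is $|t-s|^{0}$ and the integrand is merely bounded in time under the standing hypotheses, so the claim that this is ``the tail of a convergent integral'' is unjustified as stated. The paper instead writes $u(t)-u^{+}(t)=\int_t^{+\infty}W(s-t)\bigl(-V_1u(s)+V_2F(u(s))\bigr)ds$ and estimates it with the same duality bound as in part $(i)$. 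Your remark about the decomposition $W(t-s)=\dot W(t)W(-s)+W(t)\dot W(-s)$ and the need to adjust the scattering data accordingly is sensible bookkeeping (indeed more careful than the paper's own definition of $(u_0^{+},u_1^{+})$), but it does not repair the analytic estimate, which is where the argument actually lives.
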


\begin{proof}
\item[$(i)$] For a given $v \in L^\infty(\mathbb{R},L^{(r_0,\infty)}_{rad}(\mathbb{R}^n))$, we consider the following linear integral equation
\begin{equation}\label{InterEq}
u(t)=\dot{W}(t)u_{0}+W(t)u_{1}+\int_{0}^{t}W(t-s) \left(  -V_1(x)v(s,x) + V_2(x)F(v(s)) \right) ds.
\end{equation}
Setting 
\begin{equation}
\zeta(f)(t,x) = \int_0^t W(t-s)f(s,x)ds.
\end{equation}
We have
\begin{eqnarray}
\zeta(f)(t,x) &=& \int_{-\infty}^{+\infty} \omega(t-s,x-y)f(s,y)dyds,
\end{eqnarray}
where the kernel $\omega$ is determined by
\begin{equation}
\widehat{\omega}(t-s,\xi)= \begin{cases}
\frac{\sin((t-s)|\xi|)}{|\xi|},\,\,\, &0<s<t\\
0,&\text{    otherwise}.
\end{cases}
\end{equation}

Let $s=\dfrac{r_0}{p}$, we have
\begin{equation}
\frac{1}{s}-\frac{1}{r_0} = \frac{p-1}{r_0} =  \frac{2}{n}.
\end{equation}
Hence, 
\begin{equation}\label{d1d2}
\frac{n}{r_0'}-\frac{n}{s'} - 2 =0,
\end{equation}
where $r_0' = \dfrac{r_0}{r_0-1}$ and $s'=\dfrac{s}{s-1}$. Moreover, since $p>\dfrac{n^2+n-4}{n(n-3)}$, we obtain that
\begin{equation}
\left( 1 - \frac{2}{n(p-1)}, 1-\frac{2p}{n(p-1)}  \right) \in ]A_1A_2[,
\end{equation}
where
$$A_1 = \left( \frac{n+1}{2(n-1)},\frac{n+1}{2(n-1)} -\frac{2}{n} \right) \in P_2P_4 \text{   and   } A_2 = \left(  1,\frac{n-2}{n}\right) \in P_4P_5.$$
Observe that $]A_1A_2[ \in \Delta_{P_2P_4P_5} \setminus \Delta_{P_1P_2P_3}$ for $n\geqslant 4$.
This leads to $\left( 1-\dfrac{1}{r_0},1-\dfrac{1}{s}\right) = \left( \dfrac{1}{r_0'},\dfrac{1}{s'} \right) \in \Delta_{P_2P_4P_5}$.

Now, by using Tonelli’s theorem, the H\"older inequality and Proposition \ref{Yamazaki} for $(d_1, d_2) = (r_0',s')$ with noting that \eqref{d1d2},  we can estimate for each radial symmetric function $\phi\in C^\infty_{rad}(\mathbb{R}^n)$ and $f\in L^\infty(\mathbb{R},L^{(s,\infty)}_{rad}(\mathbb{R}^n))$ that
\begin{eqnarray}
|\left<\zeta(f)(t),\phi\right>| &=& \left|\int_{\mathbb{R}^n}\zeta(f)(t,x)\phi(x)dx \right|\cr
&\leqslant& \int_{-\infty}^{+\infty} \left<  |f(\tau,\cdot)|, |W(t-\tau)\phi|\right> d\tau\cr
&\leqslant& C\int_{-\infty}^{+\infty}\|f(\tau,\cdot) \|_{(s,\infty)} \| W(t-\tau)\phi\|_{(s',1)}d\tau\cr 
&\leqslant& C\sup_{t\in \mathbb{R}}\| f(t,\cdot)\|_{(s,\infty)} \int_{-\infty}^{+\infty} \| W(t-\tau)\phi\|_{(s',1)}d\tau\cr
&\leqslant& K\sup_{t\in \mathbb{R}}\| f(t,\cdot)\|_{(s,\infty)} \|\phi \|_{(r_0',1)}.
\end{eqnarray}
This shows that
\begin{equation}\label{bound1}
\|\zeta(f)(t) \|_{(r_0,\infty)} \leqslant K\sup_{t\in \mathbb{R}}\| f(t,\cdot)\|_{(s,\infty)}
\end{equation}
provided that $r_0 = \dfrac{n(p-1)}{2}$ and $s=\dfrac{r_0}{p}$.

Using inequality \eqref{bound1} and H\"oder inequality with noting that $\dfrac{1}{s} = \dfrac{1}{r_0}+\dfrac{2}{n} = \dfrac{b}{n}+\dfrac{q}{r_0}$, we can estimate the right hand-side of \eqref{InterEq} as follows
\begin{eqnarray}\label{Bounded}
\|\hbox{RHS of \eqref{InterEq}}\|_{(r_0,\infty)} &\leqslant& \| \dot{W}(t)u_0 + W(t)u_1\|_{(r_0,\infty)} \cr
&&+ \| \zeta(-V_1v)(t)\|_{(r_0,\infty)} + \| \zeta(V_2F(v))(t)\|_{(r_0,\infty)} \cr
&\leqslant& \| \dot{W}(t)u_0 + W(t)u_1\|_{(r_0,\infty)} + K\sup_{t\in \mathbb{R}}\| V_1v(t)\|_{(s,\infty)} + K\sup_{t\in \mathbb{R}}\| V_2F(v(t))\|_{(s,\infty)}\cr
&\leqslant& \| \dot{W}(t)u_0 + W(t)u_1\|_{(r_0,\infty)} + KC_0\| V_1\|_{(\frac{n}{2},\infty)}\sup_{t\in \mathbb{R}}\| v(t)\|_{(r_0,\infty)}\cr
&&+ KC\| V_2\|_{(\frac{n}{b},\infty)}\sup_{t\in \mathbb{R}}\||v(t)|^{q-1}v(t) \|_{(\frac{r_0}{q},\infty)}\cr
&\leqslant& \|(u_0,u_1) \|_{\mathcal{I}_{rad}}+ KC_0\| V_1\|_{(\frac{n}{2},\infty)}\sup_{t\in \mathbb{R}}\| v(t)\|_{(r_0,\infty)}\cr
&&+ KC\| V_2\|_{(\frac{n}{b},\infty)}\sup_{t\in \mathbb{R}}\|v(t)\|^q_{(r_0,\infty)}.
\end{eqnarray}
This boundedness leads to the existence of a solution $u \in L^\infty(\mathbb{R}, L^{(r_0,\infty)}_{rad}(\mathbb{R}^n))$ for the linear integral equation \eqref{InterEq}. The uniqueness holds clearly. Therefore, we can define the solution operator $\Phi: L^\infty(\mathbb{R}, L^{(r_0,\infty)}_{rad}(\mathbb{R}^n)) \to L^\infty(\mathbb{R}, L^{(r_0,\infty)}_{rad}(\mathbb{R}^n))$ as follows
\begin{equation}
\Phi(v)(t)=u(t),\,\,\, t\in \mathbb{R},
\end{equation}
which is solution of \eqref{InterEq}, i.e., we have
\begin{equation}\label{FormulaPhi}
\Phi(v)(t) = \dot{W}(t)u_0 + W(t)u_1 + \int_0^tW(t-s)(-V_1(x)v(s,x)+ V_2(x)F(v(s)))ds.
\end{equation}
In order to prove the well-posedness of equation \eqref{W2} we prove that the operator $\Phi$ acts the ball $B_\rho = \left\{ u \in L^\infty(\mathbb{R}, L^{(r_0,\infty)}_{rad}(\mathbb{R}^n)): \, \sup\limits_{t\in \mathbb{R}}\| u(t)\|_{(r_0,\infty)}\leqslant \rho \right\}$ into itself and is a contraction for $\rho$ and $\| (u_0,u_1)\|_{\mathcal{I}_{rad}}$ are small enough. Indeed, from inequality \eqref{Bounded} and formula \eqref{FormulaPhi} we have
for each $v\in B_\rho$ that
\begin{eqnarray}
\| \Phi(v)(t)\|_{(r_0,\infty)} &\leqslant& \| (u_0,u_1)\|_{\mathcal{I}_{rad}} + KC_0 \| V_1\|_{(\frac{n}{2},\infty)}\rho + KC\|V_2 \|_{(\frac{n}{b},\infty)}\rho^q \cr
&\leqslant& \rho
\end{eqnarray}
provided that $\rho$, $\| V_1\|_{(\frac{n}{2},\infty)}$ (hence $c_1$) and $\| (u_0,u_1)\|_{\mathcal{I}_{rad}}$ are small enough. Therefore, the mapping $\Phi$ acts $B_\rho$ into itself.

Now, for $v_1,v_2\in B_\rho$, we can estimate by the same way as \eqref{Bounded} that
\begin{eqnarray}\label{Lipschitz}
\|\Phi(v_1)(t) - \Phi(v_2)(t) \|_{(r_0,\infty)} &\leqslant& \| \zeta(-V_1)(v_1(t)-v_2(t))\|_{(r_0,\infty)} + \| \zeta(V_2(F(v_1)-F(v_2))(t)\|_{(r_0,\infty)}\cr
&\leqslant& KC_0 \| V_1\|_{(\frac{n}{2},\infty)} \sup_{t\in \mathbb{R}}\| v_1(t) - v_2(t)\|_{(r_0,\infty)} \cr
&&+ KC\| V_2\|_{(\frac{n}{b},\infty)}\rho^{q-1}\sup_{t\in \mathbb{R}}\| v_1(t) - v_2(t) \|_{(r_0,\infty)}\cr
&=& \left( KC_0 \| V_1\|_{(\frac{n}{2},\infty)} +  KC\| V_2\|_{(\frac{n}{b},\infty)}\rho^{q-1}\right) \sup_{t\in \mathbb{R}}\| v_1(t) - v_2(t)\|_{(r_0,\infty)}.
\end{eqnarray} 
Hence, the mapping $\Phi$ is contraction provided that $\| V_1\|_{(\frac{n}{2},\infty)}$ (hence $c_1$) and $\rho$ are small enough such that
\begin{equation}\label{Condition}
0<KC_0 \| V_1\|_{(\frac{n}{2},\infty)} +  KC\| V_2\|_{(\frac{n}{b},\infty)}\rho^{q-1}<1.
\end{equation}
By using fixed point arguments, for the above values of $\rho$, $c_1$, and $\norm{(u_0,u_1)}{\mathcal{I}{rad}}$, we obtain that $\Phi$ has a fixed point $u$, which is also a solution of equation \eqref{intergralEq}, and hence is a mild solution of equation \eqref{Klein}.

{The uniqueness holds from inequalities \eqref{Lipschitz} and \eqref{Condition}. Indeed, if $u$ and $\hat{u}$ are two mild solutions of equation \eqref{Klein} in the ball $B_\rho$ corresponding to the initial data $(u_0,u_1)$, then formula \eqref{FormulaPhi} yields}
{\begin{eqnarray}
u(t)-\hat{u}(t) &=& \int_0^tW(t-s)(-V_1(x)u(s,x)+ V_2(x)F(u(s,x))ds \cr
&&- \int_0^tW(t-s)(-V_1(x)\hat{u}(s,x)+ V_2(x)F(\hat{u}(s,x))ds\cr
&=&\Phi(u)(t) - \Phi(\hat{u})(t)
\end{eqnarray}
Using inequality \eqref{Lipschitz}, we estimate
\begin{eqnarray}\label{Unique}
\sup_{t\in \mathbb{R}}\norm{u(t)-\hat{u}(t)}_{(r_0,\infty)} &=& \sup_{t\in \mathbb{R}}\norm{\Phi(u)(t) - \Phi(\hat{u})(t)}_{(r_0,\infty)}\cr
&\leqslant& \left( KC_0 \| V_1\|_{(\frac{n}{2},\infty)} +  KC\| V_2\|_{(\frac{n}{b},\infty)}\rho^{q-1}\right) \sup_{t\in \mathbb{R}}\| u(t) - \hat{u}(t)\|_{(r_0,\infty)}.
\end{eqnarray} 
This is equivalent to
\begin{equation}\label{Unique}
\left( 1-  KC_0 \| V_1\|_{(\frac{n}{2},\infty)} - KC\| V_2\|_{(\frac{n}{b},\infty)}\rho^{q-1} \right)\sup_{t\in \mathbb{R}}\| u(t) - \hat{u}(t)\|_{(r_0,\infty)} \leqslant 0.
\end{equation}
Using \eqref{Condition}, we obtain from \eqref{Unique} that
$\sup\limits_{t\in \mathbb{R}} \|u(t)-\hat{u}(t)\| = 0,$
which implies that $u(t) = \hat{u}(t)$ for all $t\in \mathbb{R}$. 
Hence, the uniqueness of the mild solution to equation \eqref{Klein} in the ball $B_\rho$ follows.} \\

\item[$(ii)$] We show only the property (\ref{ine16}). The proof of
(\ref{ine17}) is left to the reader. We start by defining
\[
u_{1}^{+}=u_{1}+\int_{0}^{+\infty}W(-s)\left( -V_1u(s)+V_2F(u(s)) \right)ds\text{ and }\,u_{0}^{+}=u_{0}.
\]
For $t>0$, consider $u^{+}$ given by
\[
u^{+}=\dot{W}(t)u_{0}+W(t)u_{1}+\int_{0}^{+\infty}W(t-s)\left( -V_1u(s) + V_2F(u(s)) \right)ds,
\]
and note that
\begin{equation}\label{Scat1}
u(t)-u^+(t) = \int_{t}^{+\infty}W(s-t) \left(-V_1u(s) + V_2F(u(s)) \right)ds.
\end{equation}
Setting 
\begin{equation}
\gamma(f)(t,x) = \int_{t}^{+\infty}W(s-t) f(s,x)ds.
\end{equation}
Similar to Assertion $(i)$ we can prove that
\begin{equation}\label{gamma}
\|\gamma(f)(t) \|_{(r_0,\infty)} \leqslant \tilde{K}\sup_{t\in \mathbb{R}}\| f(t,
\cdot)\|_{(s,\infty)}
\end{equation}
for $r_0 = \dfrac{n(p-1)}{2}$ and $s = \dfrac{r_0}{p}$. Applying inequality \eqref{gamma} to \eqref{Scat1}, we obtain that
\begin{eqnarray}\label{Scat2}
\| u(t)-u^+(t)\|_{(r_0,\infty)} &=& \left\| \int_{t}^{+\infty}W(s-t) \left(-V_1u(s) + V_2F(u(s)) \right)ds\right\|_{(r_0,\infty)}\cr
&\leqslant& \| \gamma(-V_1u)(t)\|_{(r_0,\infty)} + \| \gamma(V_2F(u))(t) \|_{(r_0,\infty)}\cr
&\leqslant& \tilde{K}\tilde{C}_0\| V_1\|_{(\frac{n}{2},\infty)}\sup_{t\in \mathbb{R}}\| u(t)\|_{(r_0,\infty)} + \tilde{K}\tilde{C}\| V_2\|_{(\frac{n}{b},\infty)}\sup_{t\in \mathbb{R}}\| u(t)\|^q_{(r_0,\infty)}\cr
&\leqslant& \tilde{K}\tilde{C}_0\| V_1\|_{(\frac{n}{2},\infty)}\rho + \tilde{K}\tilde{C}\| V_2\|_{(\frac{n}{b},\infty)}\rho^q\cr
&<& +\infty
\end{eqnarray}
for all $t>0$. This convergence shows that
\begin{equation}
\lim_{t\to + \infty}\| u(t)-u^+(t)\|_{(r_0,\infty)} = \lim_{t\to +\infty}\left\| \int_{t}^{+\infty}W(s-t) \left(-V_1u(s) + V_2F(u(s)) \right)ds\right\|_{(r_0,\infty)}=0
\end{equation}
which leads to the scattering behaviour \eqref{ine16}. Our proof is completed.
\end{proof}
\begin{remark}
If we consider $b=0$, then we have $p=q$ and the proof of Theorem \ref{Global} is still valid (see also \cite[Theorem 3.3]{Fe2017} for this case).
\end{remark}
\subsection{Polynomial stability}\label{S32}
In this part we establish a polynomial stability of global mild solution obtained in Theorem \ref{Global}.
\begin{theorem}\label{polynomial}(Polynomial stability). Suppose that $\tilde{u} \in L^\infty(\mathbb{R},L^{(r_0,\infty)}_{rad}(\mathbb{R}^n))$ is other mild solution of equation \eqref{Klein} corresponding with initial data $(\tilde{u}%
_{0},\tilde{u}_{1})$. Then, for a positive constant $0 < h<1$, we have
\begin{equation}
\lim_{|t|\rightarrow\infty}|t|^{h}\left\Vert \dot{W}(t)(u_{0}%
-\tilde{u}_{0})+W(t)(u_{1}-\tilde{u}_{1})\right\Vert _{(r_0,\infty)}%
=0\label{condition}%
\end{equation}
if and only if
\begin{equation}
\lim_{|t|\rightarrow\infty}|t|^{h}\left\Vert u(t)-\tilde
{u}(t)\right\Vert _{(r_0,\infty)}=0.\label{Stability}
\end{equation}
\end{theorem}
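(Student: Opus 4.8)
\emph{Plan.} I would reduce both implications to a single time-weighted refinement of the Duhamel bound \eqref{bound1}, and then, for the substantive implication, run a fixed-point argument in a time-weighted space. Put $w=u-\tilde u$ and subtract the Duhamel formula \eqref{intergralEq} for $\tilde u$ from that for $u$, getting $w(t)=L(t)+\zeta(g_w)(t)$ with $L(t):=\dot{W}(t)(u_0-\tilde u_0)+W(t)(u_1-\tilde u_1)$ and $g_w:=-V_1w+V_2(F(u)-F(\tilde u))$. By \eqref{Klein-Cond-F} and the H\"older inequality \eqref{Holder} one has $\|g_w(\sigma)\|_{(s,\infty)}\leqslant\kappa\,\|w(\sigma)\|_{(r_0,\infty)}$ with $\kappa=C_0\|V_1\|_{(n/2,\infty)}+C\|V_2\|_{(n/b,\infty)}\rho^{q-1}$, which can be made as small as desired after shrinking $c_1$ and the data size in Theorem \ref{Global} (here $u,\tilde u$ lie in the ball $B_\rho$ of that theorem, and $s=r_0/p$, so that \eqref{d1d2} holds and $(1/r_0',1/s')\in\Delta_{P_2P_4P_5}$, as shown in part $(i)$); everything is radial in $x$. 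Thus the theorem comes down to controlling $\zeta(g_w)$ with the time weight $|t|^{h}$.

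\emph{Key lemma (weighted Duhamel estimate).} I would prove: for $0<h<1$ and radial $f$ with $\|f\|_Y:=\sup_t(1+|t|)^h\|f(t)\|_{(s,\infty)}<\infty$, one has $\sup_t(1+|t|)^h\|\zeta(f)(t)\|_{(r_0,\infty)}\leqslant C\|f\|_Y$, and moreover $|t|^h\|f(t)\|_{(s,\infty)}\to0$ forces $|t|^h\|\zeta(f)(t)\|_{(r_0,\infty)}\to0$. The argument follows the duality scheme of part $(i)$: pair $\zeta(f)(t)$ with $\phi\in C^\infty_{rad}$, $\|\phi\|_{(r_0',1)}\leqslant1$, use self-adjointness of $W$, and split $\int_0^t=\int_0^{t/2}+\int_{t/2}^t$. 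On $\int_{t/2}^t$ one factors out $\sup_{\sigma\in[t/2,t]}\|f(\sigma)\|_{(s,\infty)}\leqslant 2^h(1+|t|)^{-h}\|f\|_Y$ and applies the Yamazaki estimate (Lemma \ref{Yamazaki}) with $(d_1,d_2)=(r_0',s')$ --- whose time weight $|t|^{n(1/r_0'-1/s')-2}$ equals $1$ by \eqref{d1d2} --- to get $\int_{\mathbb{R}}\|W(t-\sigma)\phi\|_{(s',1)}\,d\sigma\leqslant C\|\phi\|_{(r_0',1)}$. On $\int_0^{t/2}$, where $|t-\sigma|\geqslant|t|/2$, one invokes the pointwise dispersive estimate \eqref{W5} for $W(t-\sigma)\phi$, which by \eqref{d1d2} decays exactly like $|t-\sigma|^{-1}$; this reduces the estimate to $|t|^{-1}\int_0^{t/2}\|f(\sigma)\|_{(s,\infty)}\,d\sigma\lesssim|t|^{-1}\|f\|_Y\int_0^{t/2}(1+\sigma)^{-h}\,d\sigma\lesssim|t|^{-h}\|f\|_Y$, where the hypothesis $0<h<1$ is precisely what makes $\int_0^{t/2}(1+\sigma)^{-h}\,d\sigma\lesssim|t|^{1-h}$. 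The sharpening to ``$\to0$'' comes from the further split $\int_0^{t/2}=\int_0^{\sqrt{|t|}}+\int_{\sqrt{|t|}}^{t/2}$ together with the vanishing of $\sup_{\sigma\geqslant\sqrt{|t|}}|\sigma|^h\|f(\sigma)\|_{(s,\infty)}$; the case $t<0$ is symmetric.

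\emph{The two directions.} Since $u,\tilde u\in L^\infty(\mathbb{R},L^{(r_0,\infty)}_{rad})$, so is $w$. For \eqref{Stability}$\Rightarrow$\eqref{condition}: $|t|^h\|w(t)\|_{(r_0,\infty)}\to0$ gives $|t|^h\|g_w(t)\|_{(s,\infty)}\leqslant\kappa|t|^h\|w(t)\|_{(r_0,\infty)}\to0$, hence $|t|^h\|\zeta(g_w)(t)\|_{(r_0,\infty)}\to0$ by the key lemma, and $L=w-\zeta(g_w)$ gives \eqref{condition}. For the converse --- the core of the statement --- introduce the Banach space $X=\{v:\|v\|_X:=\sup_t(1+|t|)^h\|v(t)\|_{(r_0,\infty)}<\infty\}$, its closed subspace $X^0=\{v\in X:|t|^h\|v(t)\|_{(r_0,\infty)}\to0\}$, and the ball $\mathcal{B}=\{v\in X^0:\sup_t\|v(t)\|_{(r_0,\infty)}\leqslant2\rho\}$, which is closed in $X$ (because $\|\cdot\|_{L^\infty_tL^{(r_0,\infty)}}\leqslant\|\cdot\|_X$) hence complete. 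On $\mathcal{B}$ define $\mathcal{T}(v)(t)=L(t)+\int_0^tW(t-\sigma)[-V_1v(\sigma)+V_2(F(v(\sigma)+\tilde u(\sigma))-F(\tilde u(\sigma)))]\,d\sigma$. Hypothesis \eqref{condition} says precisely that $L\in X^0$, and $\|L\|_X<\infty$ because in addition $L\in L^\infty_tL^{(r_0,\infty)}_{rad}$ (as $(u_0-\tilde u_0,u_1-\tilde u_1)\in\mathcal{I}_{rad}$); the key lemma, the unweighted bound \eqref{bound1}, and the smallness of $\kappa$ and of $\|(u_0-\tilde u_0,u_1-\tilde u_1)\|_{\mathcal{I}_{rad}}$ then give $\mathcal{T}(\mathcal{B})\subset\mathcal{B}$ and $\|\mathcal{T}(v_1)-\mathcal{T}(v_2)\|_X\leqslant C\kappa\|v_1-v_2\|_X$ with $C\kappa<1$. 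Banach's fixed point theorem produces a unique fixed point in $\mathcal{B}$; it equals $w$, since $w$ also satisfies $w=\mathcal{T}(w)$ (note $w+\tilde u=u$) and $\mathcal{T}$ is simultaneously a contraction in the $L^\infty_tL^{(r_0,\infty)}$-metric on the ball $\{v:\sup_t\|v(t)\|_{(r_0,\infty)}\leqslant2\rho\}$, where the mild solution is unique. Hence $w\in X^0$, which is \eqref{Stability}.

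\emph{Main obstacle.} The delicate step is the key lemma: the available dispersive decay is the borderline rate $|t-\sigma|^{-1}$, which is not integrable across the diagonal $\sigma=t$, so the near-diagonal contribution must be absorbed through the endpoint Yamazaki estimate --- whose gain is precisely the cancellation \eqref{d1d2} of the time weight --- rather than by the pointwise bound; and the restriction $0<h<1$ cannot be removed, being exactly what makes the far-diagonal integral converge at $\sigma=0$. A secondary subtlety is that the contraction in the converse implication must be carried out in the $\|\cdot\|_X$-metric but over the $L^\infty_t$-ball, since the Lipschitz constant of the nonlinear term is controlled only through the $L^\infty(\mathbb{R},L^{(r_0,\infty)})$-norm of its argument.
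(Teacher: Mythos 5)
Your proof is correct, and its central ingredient --- the time-weighted Duhamel estimate obtained by splitting $\int_0^t$ at $t/2$, treating the far-from-diagonal piece with the dispersive bound \eqref{W5} (which by \eqref{d1d2} decays exactly like $|t-\tau|^{-1}$, whence the restriction $0<h<1$) and the near-diagonal piece with the Yamazaki estimate of Lemma \ref{Yamazaki} --- is exactly the paper's inequality \eqref{decay}, proved by the same duality pairing against $\phi\in C^\infty_{rad}(\mathbb{R}^n)$ with $\Vert\phi\Vert_{(r_0',1)}\leqslant 1$; the direction \eqref{Stability}$\Rightarrow$\eqref{condition} is likewise handled identically by the triangle inequality. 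Where you genuinely diverge is the substantive direction \eqref{condition}$\Rightarrow$\eqref{Stability}: the paper closes it by a priori absorption, moving the term involving $\Vert V_1\Vert_{(\frac{n}{2},\infty)}+\Vert V_2\Vert_{(\frac{n}{b},\infty)}(\rho^{q-1}+\tilde\rho^{q-1})$ to the left-hand side, which literally yields only $\sup_{t}t^{h}\Vert u(t)-\tilde u(t)\Vert_{(r_0,\infty)}<\infty$; the passage from this uniform bound to the vanishing of the limit is then asserted without further argument, and that step needs exactly the refinement you supply. Your contraction argument in the weighted subspace $X^0$, together with the additional split $\int_0^{t/2}=\int_0^{\sqrt{|t|}}+\int_{\sqrt{|t|}}^{t/2}$ showing that the Duhamel operator maps ``vanishing'' data to ``vanishing'' output, and the identification of the fixed point with $u-\tilde u$ via uniqueness in the unweighted ball, makes this direction fully rigorous. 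So your route is somewhat longer but more complete precisely where the paper is terse; the paper's absorption argument is shorter and would become airtight if supplemented with the same lim-sup/vanishing refinement you describe (one minor housekeeping point on your side: the smallness of $\rho$, $\tilde\rho$ and of the data differences, which both proofs use, is implicit rather than stated in the theorem's hypotheses, so it is worth flagging explicitly).
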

\begin{proof}
Without loss of generality, assume also that $t>0$. There exist positive constants $\rho$ and $\tilde{\rho}$ such that $\| u(t)\|_{(r_0,\infty)}<\rho$ and $\| \tilde{u}(t)\|_{(r_0,\infty)}<\tilde{\rho}$ for all $t$.
Then, we can estimate
\begin{eqnarray}\label{Stab1}
t^h\left\Vert u(t)-\tilde{u}(t)\right\Vert _{(r_0,\infty)}  &\leqslant&
t^h\left\Vert \dot{W}(t)(u_{0}-\tilde{u}_{0})+W(t)(u_{1}-\tilde
{u}_{1})\right\Vert _{(r_0,\infty)} \cr
&&+ t^h\left\| \int_0^t W(t-\tau) [-V_1(u(\tau)-\tilde{u}(\tau))]d\tau\right\|_{(r_0,\infty)} \cr
&&+ t^h\left\|\int_0^t W(t-\tau)[V_2(F(u)(\tau)-F(\tilde{u})(\tau))]d\tau\right\|_{(r_0,\infty)}\cr
&\leqslant&
t^h\left\Vert \dot{W}(t)(u_{0}-\tilde{u}_{0})+W(t)(u_{1}-\tilde
{u}_{1})\right\Vert _{(r_0,\infty)} \cr
&&+ t^h \int_0^t \left\| W(t-\tau) [-V_1(u(\tau)-\tilde{u}(\tau))] \right\|_{(r_0,\infty)} d\tau \cr
&&+ t^h \int_0^t \left\| W(t-\tau)[V_2(F(u)(\tau)-F(\tilde{u})(\tau))] \right\|_{(r_0,\infty)}d\tau.
\end{eqnarray}
Now we prove that
\begin{equation}\label{decay}
t^h \int_0^t \| W(t-\tau) f(\tau,x)\|_{(r_0,\infty)}d\tau \leqslant \tilde{L} \sup_{t\in \mathbb{R}_+} t^h \| f(t,\cdot)\|_{s,\infty}.
\end{equation}
Indeed, we split the left hand-side into two parts
\begin{equation}
I_1 = t^h\int_0^{t/2} \| W(t-\tau) f(\tau,x)\|_{(r_0,\infty)}d\tau \text{   and   } I_2=t^h \int_{t/2}^t \| W(t-\tau) f(\tau,x)\|_{(r_0,\infty)}d\tau. 
\end{equation}
Using the dispersive estimate \eqref{W5} the first part can be estimated as follows
\begin{eqnarray}\label{decay1}
|\left<I_1,\phi\right>| &\leqslant& t^h\int_0^{t/2} \| f(\tau,x)\|_{(s,\infty)}\| W(t-\tau)\phi\|_{(s',1)}d\tau\cr
&\leqslant& t^h\int_0^{t/2} (t-\tau)^{-n\left( \frac{1}{s'} - \frac{1}{r_0'}\right)+1} \| f(\tau,x)\|_{(s,\infty)}\| \phi\|_{(r_0',1)}d\tau\cr
&\leqslant& t^h \int_0^{t/2} (t-\tau)^{-1}\tau^{-h} d\tau \sup_{t\in \mathbb{R}_+}(t^h\| f(t,\cdot)\|_{(s,\infty)}) \|\phi \|_{(r_0',1)} \text{   (we used \eqref{d1d2})}\cr
&\leqslant& L_1 \sup_{t\in \mathbb{R}_+}(t^h\| f(t,\cdot)\|_{(s,\infty)}) \|\phi \|_{(r_0',1)}
\end{eqnarray}
for each $\phi \in C_{rad}^\infty(\mathbb{R}^n)$. Using Proposition \ref{Yamazaki} we estimate the second part as 
\begin{eqnarray}\label{decay2}
|\left<I_2,\phi\right>| &\leqslant& t^h\int_{t/2}^{t} (t-\tau)^{n\left( \frac{1}{s'} - \frac{1}{r_0'}\right)-2} \| f(\tau,x)\|_{(s,\infty)}\| W(t-\tau)\phi\|_{(s',1)}d\tau \text{   (because \eqref{d1d2})}\cr
&\leqslant& t^h\int_{t/2}^{t} (t-\tau)^{n\left( \frac{1}{s'} - \frac{1}{r_0'}\right)-2} \tau^{-h}(\tau^h\| f(\tau,x)\|_{(s,\infty)})\| W(t-\tau)\phi\|_{(s',1)}d\tau\cr
&\leqslant& t^h (\frac{t}{2})^{-h}\int_{t/2}^t (t-\tau)^{n\left( \frac{1}{s'} - \frac{1}{r_0'}\right)-2} \| W(t-\tau)\phi\|_{(s',1)} d\tau \sup_{t\in\mathbb{R}_+}(t^h\| f(t,\cdot)\|_{(s,\infty)}) \| \phi\|_{(r_0',1)}\cr
&\leqslant& L_2\sup_{t\in\mathbb{R}_+}(t^h\| f(t,\cdot)\|_{(s,\infty)}) \| \phi\|_{(r_0',1)} \text{   (we used Proposition \ref{Yamazaki})}
\end{eqnarray} 
for each $\phi \in C_{rad}^\infty(\mathbb{R}^n)$. The inequalities \eqref{decay1} and \eqref{decay2} implies \eqref{decay}.

Applying this inequality to \eqref{Stab1} and using the H\"older inequality we obtain that
\begin{eqnarray}
t^h\left\Vert u(t)-\tilde{u}(t)\right\Vert _{(r_0,\infty)}  &\leqslant& t^h\left\Vert \dot{W}(t)(u_{0}-\tilde{u}_{0})+W(t)(u_{1}-\tilde
{u}_{1})\right\Vert _{(r_0,\infty)} \cr
&&+ \| V_1\|_{(\frac{n}{2},\infty)}\sup_{t\in \mathbb{R}}\left(t^h\| u(t) - \tilde{u}(t)\|_{(r_0,\infty)}\right) \cr
&&+ \| V_2\|_{(\frac{n}{b},\infty)}\left( \rho^{q-1} + \tilde{\rho}^{q-1}  \right)\left(\sup_{t\in \mathbb{R}}t^h\|u(t)-\tilde{u}(t)\|_{(r_0,\infty)}\right).
\end{eqnarray}
Therefore,
\begin{eqnarray}
&&\left( 1- \| V_1\|_{(\frac{n}{2},\infty)} -\| V_2\|_{(\frac{n}{b},\infty)}\left( \rho^{q-1} + \tilde{\rho}^{q-1}  \right)  \right)\sup_{t\in \mathbb{R}_+} \left(t^h\left\Vert u(t)-\tilde{u}(t)\right\Vert _{(r_0,\infty)}\right) \cr
&\leqslant& \sup_{t\in \mathbb{R}_+}\left( t^h\left\Vert \dot{W}(t)(u_{0}-\tilde{u}_{0})+W(t)(u_{1}-\tilde
{u}_{1})\right\Vert _{(r_0,\infty)}\right).
\end{eqnarray}
Observe that $1- \| V_1\|_{(\frac{n}{2},\infty)} -\| V_2\|_{(\frac{n}{b},\infty)}\left( \rho^{q-1} + \tilde{\rho}^{q-1}  \right)  >0$ if $\| V_1\|_{(\frac{n}{2},\infty)}$ (hence $c_1$), $\rho$ and $\tilde{\rho}$ are small enough.
This shows that: if the limit \eqref{condition} holds, then we have
$$\lim_{t\rightarrow +\infty}t^{h}\left\Vert u(t)-\tilde
{u}(t)\right\Vert _{(r_0,\infty)}=0.$$

Now, we consider that the limit \eqref{Stability} holds. Clearly, we have
\begin{eqnarray}
&&t^h\left\Vert \dot{W}(t)(u_{0}-\tilde{u}_{0})+W(t)(u_{1}-\tilde
{u}_{1})\right\Vert _{(r_0,\infty)}\cr &\leqslant& t^h\left\Vert u(t)-\tilde{u}(t)\right\Vert _{(r_0,\infty)} + t^h\left\| \int_0^t W(t-\tau) [-V_1(u(\tau)-\tilde{u}(\tau))]d\tau\right\|_{(r_0,\infty)} \cr
&&+ t^h\left\|\int_0^t W(t-\tau)[V_2(F(u)(\tau)-F(\tilde{u})(\tau))]d\tau\right\|_{(r_0,\infty)}\cr
&\leqslant& \sup_{t\in \mathbb{R}_+}\left(t^h\left\Vert u(t)-\tilde{u}(t)\right\Vert _{(r_0,\infty)} \right) \left( 1 + \| V_1\|_{(\frac{n}{2},\infty)} + \| V_2\|_{(\frac{n}{b},\infty)}\left( \rho^{q-1} + \tilde{\rho}^{q-1}  \right) \right)\cr
&\longrightarrow& 0
\end{eqnarray}
as $t$ tends to infinity by \eqref{Stability}. Therefore, the limit \eqref{condition} holds. Our proof is complete.
\end{proof}
\begin{remark}\label{rem}
\begin{itemize}
\noindent

\item[$(i)$] By using polynomial stability obtained in Theorem \ref{polynomial} we can improve the decay of scattering as: letting $u^\pm$ as in the proof of Assertion $(ii)$ of Theorem \ref{Global}, we have for $0<h<1$ that
\begin{equation}\label{improve}
\| u(t) - u^\pm (t)\|_{(r_0,\infty)} = O(|t|^{-h}), \text{   as   } t\to \pm\infty
\end{equation}
provided that
\begin{equation}
\lim_{|t|\to +\infty} |t|^h\| \dot{W}(t)u_0 + W(t)u_1\|_{(r_0,\infty)} = 0.
\end{equation}
Indeed, the proof of \eqref{improve} is done by improving \eqref{Scat2} with noting that $\lim\limits_{|t|\to +\infty}|t|^h\| u(t)\|_{(r_0,\infty)} =0$ (i.e., we used Theorem \ref{polynomial} for $\tilde{u}=0$).

\item[$(ii)$] We notice that the limit \eqref{condition} holds if we consider that $(D(u_0-\tilde{u}_0),u_1-\tilde{u}_1) \in L^{(r_0,\infty)}(\mathbb{R}^n)\times L^{(r_0,\infty)}(\mathbb{R}^n)$. Indeed, using the H\"older inequality and dispersive estimate \eqref{W5} we have for each $\phi\in C^\infty_{rad}(\mathbb{R}^n)$ that
\begin{eqnarray}
|\left<W(t)(u_1-\tilde{u}_1), \phi \right>| &=& |\left< u_1-\tilde{u}_1,W(t)\phi\right>|\cr
&\leqslant& \| u_1 - \tilde{u}_1\|_{(r_0,\infty)} \|W(t)\phi \|_{(r_0',1)}\cr
&\leqslant& M_2|t|^{-n\left( \frac{1}{s'} - \frac{1}{r_0'} \right)+1}\| u_1 - \tilde{u}_1\|_{(r_0,\infty)} \|\phi \|_{(s',1)}\cr
&\leqslant& M_2 |t|^{-1}\| u_1 - \tilde{u}_1\|_{(r_0,\infty)} \|\phi \|_{(s',1)} \text{   (we used \eqref{d1d2})}.
\end{eqnarray}
Hence, we obtain 
\begin{equation}\label{lim1}
|t|^h\| W(t)(u_1-\tilde{u}_1)\|_{(r_0,\infty)} \leqslant M_2 |t|^{-1+h}\| u_1-\tilde{u}_1\|_{(r_0,\infty)}.
\end{equation}
By the same way we can estimate 
\begin{equation}\label{lim2}
|t|^h\| \dot{W}(t)(u_0-\tilde{u}_0)\|_{(r_0,\infty)} \leqslant M_2 |t|^{-1+h}\| D(u_0-\tilde{u}_0) \|_{(r_0,\infty)}
\end{equation}
The inequalities \eqref{lim1} and \eqref{lim2} and condition $(D(u_0-\tilde{u}_0),u_1-\tilde{u}_1) \in L^{(r_0,\infty)}(\mathbb{R}^n)\times L^{(r_0,\infty)}(\mathbb{R}^n)$ guarantee that the limit \eqref{condition} holds as $|t|$ tends to infinity.

\end{itemize}
\end{remark}

\vspace{0.3cm}

\noindent\textbf{Data availability statement:} No data was created and used in this research.\\
\noindent\textbf{Conflict of interest statement:} The author declares no conflict of interest.

\end{document}